 \newtheorem{thm}{Theorem}[section]
 \newtheorem{cor}[thm]{Corollary}
 \newtheorem{lem}[thm]{Lemma}
 \theoremstyle{definition}
 \numberwithin{equation}{section}
 \newcommand{\A}{\mathcal{A}}
\def\be{\begin{equation}}
\def\ee{\end{equation}}
\def\ba{\begin{eqnarray}}
\def\ea{\end{eqnarray}}
\def\e1{\epsilon}
\def\AAl{\mathcal{A}_{\lambda}}
\def\A0{\stackrel{\circ}{\AAl}}
\def\o1{\omega}
\def\01{\Omega}
\def\c1{\gamma}
\def\g1{\Sigma}
\def\l1{\Lambda}
\def\v1{\varphi}
\def\d1{\delta}
\def\f1{\frac}
\def\t1{\theta}
\def\b1{\beta}
\def\bs{\begin{eqnarray*}}
\def\es{\end{eqnarray*}}
\def\m1{\Theta}
\def\w1{\wedge}
\title{Correction to Section 19.2 of Ricci Flow and the Poincar\'e Conjecture}
\author{John W. Morgan and Gang Tian}
\begin{document}

\maketitle

Abbas Bahri and Terry Tao have pointed out to us that the estimates we claimed in Chapter 19 of our book \cite{MT}  ``Ricci Flow and the Poincar\'e Conjecture" (the book) for the curve shrinking flow in an ambient Ricci flow are not correct.
The main error occurs 
on page 442 where we made a mistake in computing evolution of the curvature of the curve. Specifically, in the equation in  Lemma 19.7, we missed terms related to the time derivative of connections of $g(t)$. From this incorrect equation we deduced a bound of the form
$$\frac{\partial}{\partial t}k^2 \le \text{\rm good terms we can handle  } + Ck^2,$$
where $k$ is the curvature of the curve and $C$ is a constant depending only on the ambient Ricci flow. 
After taking into account 
the time derivative of connections of g(t), we see that there is  an extra term in the upper bound of the form  $C' k$.
 The presence of terms linear
in $k$ in this inequality translates to a constant term in the integrand of the upper bound of the time derivative of the total curvature, meaning that the exponential bound for the evolving total curvature has a multiplicative constant term that depends on both the initial total curvature  and the initial length, rather than just on the initial total curvature as stated in Corollary 19.10.
This has no effect on the application later in Chapter 19, an application to a compact family of curve-shrinking flows of $C^2$-curves, because we have uniform bounds on both total curvature and length at the initial time and hence uniform bounds on length and total curvature at all later times.

In his treatment of this material in \cite{P}, Perelman states correctly that the upper bound contains terms of the form $C(k^2+k)$ without giving an  explicit formula or derivation for the upper bound. 

We thank Bahri for his persistent belief that these inequalities in the book were incorrect and to Tao for pointing out to us exactly which terms we overlooked.  

All references to numbered statements, equations, and pages refer to the book. What we describe here  is a replacement for the material on pages 441 through 445 and corrects the oversight described above. The way we approach the computation is to work with the `obvious' metric on $M\times I$ to derive the correct formula for the time derivative of $k^2$ and then to make minor adjustments in the subsequent discussion. 
 
 \bigskip
 
\noindent{\bf Preliminary Material.}
 The context is the following:  There is a Ricci flow $g(t)$ defined on a compact manifold $M$ for $t\in I$ where $I=[0,T]$.
 In $M\times I$ we use the terminology horizontal (for directions in $M$) and vertical for the $I$-direction. We denote by $\hat g$ the metric on $M\times I$ that is the orthogonal sum of the varying family $g(t)$ of horizontal metrics and the usual metric on $I$. We denote by $\nabla$ the covariant derivative associated to $\hat g$ and by $\nabla^g$ the varying family of horizontal covariant derivatives associated to the family of metrics $g(t)$ (an operator that only applies to horizontal vector fields). All inner products are taken with respect to $\hat g$. Of course, the inner product of horizontal vector fields agrees with the moving family of horizontal inner products associated with the $g(t)$ on these horizontal vector fields. Lastly, the Riemannian curvature tensor for $\hat g$ is denoted $\widehat{Rm}$
and the Ricci curvatures of the $g(t)$ are denoted ${\rm Ric}_g$.

Let $\widehat \Gamma$ be the Christoffel symbols for $\hat g$ and $\Gamma$ those for the moving family $g(t)$ of horizontal metrics. Denote by
$i,j,k,$ etc be coordinate indices in the horizontal direction, with $e_i,e_j,e_k,$ etc, being the corresponding coordinate vector fields, and by $0$  the coordinate direction $\partial_t$. Direct computation, using the fact that $\partial g/\partial t=-2Ric_g$ shows that:
\begin{align*}\widehat \Gamma^k_{ij} & =  \Gamma^k_{ij} \\
\widehat \Gamma^k_{i0} & =\widehat \Gamma^k_{0i}  =  -g^{k\ell}({\rm Ric}_g)_{i\ell} \\
\widehat \Gamma^0_{ij} & =  ({\rm Ric}_g)_{ij},
\end{align*}
and all other Christoffel symbols for $\hat g$ vanish.
This means that for horizontal vector fields $A,B$, we have 
\begin{align}\label{Eq1} \nabla_A(B) & =\nabla^g_A(B)+Ric_g(A,B)\partial_t \\ 
\langle \nabla_A(\partial_t),B\rangle & =-{\rm Ric}_g(A,B).\label{Eq2}
\end{align}
Also, $\nabla_A(\partial t)$ is horizontal.

Suppose that we have a map $c\colon S^1\times I\to M$ satisfying the curve-shrinking equation
$$\frac{\partial c(x,t)}{\partial t} = H(x,t),$$
where $H=H(x,t)$ is the curvature vector $H=\nabla^g_SS$, with $S$ being the unit tangent vector field 
in the increasing $x$-direction along the circle at point $(x,t)$. We denote the curvature of the curve, $|H|$, by $k$. The function $k$ is not
smooth at points where it is zero but $k^2=\langle H,H\rangle$ is everywhere smooth. We define 
$$\hat c\colon S^1\times I\to M\times I$$
by $\hat c(x,t)=(c(x,t),t)$. The image, $\Sigma$, of $\hat c$ is an immersed surface since the family of curves is assumed to be a family of immersed curves and  the component of $\hat c_*(\partial_t)$ in the $I$-direction is of unit length.
We have the vector fields $X=c_*(\partial_x)$ and $\widehat H=c_*(\partial_t)$ in $M\times I$ tangent to $\Sigma$. 
They are coordinate vector fields on the surface and hence commute. The first, $X$, is horizontal. Denote by $H$  the horizontal component of $\widehat H$; its vertical component is $\partial_t$. These are both vector fields defined on all of $\Sigma$, though they are not tangent to $\Sigma$.

We shall be taking covariant derivatives using the Levi-Civit\`a connection associated with the metric $\hat g$ along the surface $\Sigma$. This is possible when the direction of derivation is tangent to $\Sigma$ and the vector field being derived exists on $\Sigma$. We can also take the bracket of vector fields tangent to $\Sigma$ with the result being another vector field tangent to $\Sigma$.

\begin{lem}{\rm (Restated version of Lemma 19.6 in this notation.)}
With the above hypothesis we have
$$\widehat H(|X|^2)=-2{\rm Ric}_{g}(X,X)-2k^2|X|^2,$$
and
$$[\widehat H,S]=\bigl(k^2+{\rm Ric}_{g}(S,S)\bigr)S.$$
\end{lem}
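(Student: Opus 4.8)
The plan is to do every computation by differentiating along the immersed surface $\Sigma$ with the Levi-Civit\`a connection $\nabla$ of $\hat g$, exploiting two facts repeatedly: that $X$ and $\widehat H$ are coordinate vector fields on $\Sigma$ and hence commute, $[X,\widehat H]=0$, and the structural identities \eqref{Eq1}--\eqref{Eq2} expressing $\nabla$ in terms of $\nabla^g$ and ${\rm Ric}_g$. Throughout I would write $\widehat H=H+\partial_t$ for the splitting of $\widehat H$ into its horizontal part $H$ and vertical part $\partial_t$, and set $\lambda=|X|$ so that $S=\lambda^{-1}X$ and ${\rm Ric}_g(X,X)=\lambda^2\,{\rm Ric}_g(S,S)$.

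For the first identity I would begin from $\widehat H(|X|^2)=2\langle\nabla_{\widehat H}X,X\rangle$, then use torsion-freeness together with $[X,\widehat H]=0$ to rewrite $\nabla_{\widehat H}X=\nabla_X\widehat H=\nabla_XH+\nabla_X\partial_t$. The vertical piece contributes $\langle\nabla_X\partial_t,X\rangle=-{\rm Ric}_g(X,X)$ directly from \eqref{Eq2}. For the horizontal piece, $X$ and $H$ are both horizontal, so \eqref{Eq1} gives $\nabla_XH=\nabla^g_XH+{\rm Ric}_g(X,H)\partial_t$; since $\langle\partial_t,X\rangle=0$, only $\langle\nabla^g_XH,X\rangle$ survives. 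Rescaling to unit speed, $\langle\nabla^g_XH,X\rangle=\lambda^2\langle\nabla^g_SH,S\rangle$, and since $\langle H,S\rangle=\langle\nabla^g_SS,S\rangle\equiv0$ one gets $\langle\nabla^g_SH,S\rangle=-\langle H,\nabla^g_SS\rangle=-\langle H,H\rangle=-k^2$. Assembling the two pieces yields $\widehat H(|X|^2)=-2k^2|X|^2-2{\rm Ric}_g(X,X)$.

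For the second identity I would deduce it from the first rather than recompute. Since $S=\lambda^{-1}X$ and $[\widehat H,X]=0$, the Leibniz rule for the Lie bracket gives $[\widehat H,S]=\widehat H(\lambda^{-1})\,X=-\lambda^{-2}\,\widehat H(\lambda)\,X$. From $\widehat H(\lambda)=\tfrac{1}{2\lambda}\widehat H(|X|^2)$ and the formula just obtained, $\widehat H(\lambda)=-\lambda\bigl(k^2+{\rm Ric}_g(S,S)\bigr)$, hence $-\lambda^{-2}\widehat H(\lambda)=\lambda^{-1}\bigl(k^2+{\rm Ric}_g(S,S)\bigr)$; multiplying by $X=\lambda S$ gives $[\widehat H,S]=\bigl(k^2+{\rm Ric}_g(S,S)\bigr)S$.

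The part requiring the most care — really the only place an error could creep in — is the bookkeeping in the splitting $\widehat H=H+\partial_t$ and the disciplined application of \eqref{Eq1}--\eqref{Eq2}: one must check at each step which vector fields are horizontal so that those identities apply, and then notice that the vertical correction terms they produce are orthogonal to the horizontal $X$ and so drop out of the inner products. A related subtlety is that $H$ and $X$, although defined on all of $\Sigma$, are not tangent to $\Sigma$, so only covariant derivatives in the tangent directions $X$ and $\widehat H$ are meaningful; the argument above is arranged so that this is always the case.
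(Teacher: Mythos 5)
Your proof is correct, and for the second identity it takes a genuinely different route from the paper. For the first identity the computation is essentially the paper's: both begin from $\widehat H(|X|^2)=2\langle\nabla_{\widehat H}X,X\rangle=2\langle\nabla_X\widehat H,X\rangle$, split $\widehat H=H+\partial_t$, and dispatch the vertical piece with \eqref{Eq2}. The only (cosmetic) difference is in the horizontal piece: you apply \eqref{Eq1} directly to $\nabla_XH$ and observe the resulting $\partial_t$-term is annihilated by $\langle\partial_t,X\rangle=0$, whereas the paper first uses $\langle H,X\rangle=0$ to move the derivative onto $X$, writing $2\langle\nabla_XH,X\rangle=-2\langle H,\nabla_XX\rangle$ and then expanding $\nabla_XX$. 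Both yield $-2k^2|X|^2$. For the second identity, the paper simply defers to the direct bracket computation on p.~442 of the book (remarking only that the $H$ there is the $\widehat H$ here), while you derive it as a formal corollary of the first: since $[\widehat H,X]=0$ and $S=|X|^{-1}X$, the Leibniz rule for brackets gives $[\widehat H,S]=\widehat H(|X|^{-1})\,X$, so the bracket is automatically a scalar multiple of $S$, and the scalar is read off from $\widehat H(|X|^2)$. This buys you brevity, self-containment (no appeal to the book's p.~442), and a structural explanation of why $[\widehat H,S]$ must be tangential, at no cost; it is arguably the cleaner presentation.
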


\begin{proof}
N.B. The equations written here are what was implicitly meant in the statements of Lemma 19.6 in the book.

Since $\widehat H$ and $X$ commute, we have
$$\widehat H(|X|^2)=2\langle \nabla_{\widehat H}(X),X\rangle=2\langle \nabla_X\widehat H,X\rangle=2\langle \nabla_X\partial_t,X\rangle+2\langle \nabla_XH,X\rangle.$$

By Equation~(\ref{Eq2}) we have
$$
2\langle \nabla_X\partial_t,X\rangle = -2({\rm Ric}_g)(X,X).
$$
For the second term, since $X$ and $H$ are orthogonal
\begin{align*}
2\langle \nabla_XH,X\rangle & =-2\langle H,\nabla_XX\rangle \\
& =-2|X|^2\langle H,\nabla_SS\rangle-2\langle H,\nabla_X(|X|)S\rangle \\
& = -2k^2|H|^2.
\end{align*}

As for the second formula, the derivation on p 442 is valid. (What is called $H$ there is actually what is denoted $\widehat H$ here.)
\end{proof}

\bigskip

\noindent{\bf Corrections for Section 19.2.}  Next we compute a replacement for Lemma 19.7 of p 442. This is the equation that has extra terms overlooked in the book.
\begin{lem}{\rm (Corrected version of 19.7.)}\label{lem1}
Let $s$ denote an arc-length parameter on $\hat c(x,t)$. We shall denote derivatives with respect to $s$ by $'$. We have:
\begin{align*}
\frac{\partial}{\partial t}k^2  = & (k^2)''-2\langle( \nabla_SH)^\perp,(\nabla_SH)^\perp\rangle+2k^4 \\
&-2{\rm Ric}_g(H,H)+4k^2{\rm Ric}_g(S,S) +2\widehat{Rm}(\widehat H,S,H,S)\\
& +2{\rm Ric}_g(S,S){\rm Ric}_g(H,H) -2(\nabla^g_S({\rm Ric}_g))(S,H).
\end{align*}
where the superscript $\perp$ means the component orthogonal to $X$.
\end{lem}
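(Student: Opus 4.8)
Since $k^2=\langle H,H\rangle$ is smooth on $S^1\times I$ and the vector field on $\Sigma$ corresponding to $\partial/\partial t$ is $\widehat H$, we have $\frac{\partial}{\partial t}k^2=\widehat H\langle H,H\rangle=2\langle\nabla_{\widehat H}H,H\rangle$. The plan is to carry out this computation entirely with the Levi-Civit\`a connection $\nabla$ of $\hat g$, feeding in the two structural facts already at hand: $\nabla_SS=H+{\rm Ric}_g(S,S)\,\partial_t$ from Equation~(\ref{Eq1}), and $[\widehat H,S]=(k^2+{\rm Ric}_g(S,S))S$ from the restated Lemma 19.6. Writing $H=\nabla_SS-{\rm Ric}_g(S,S)\,\partial_t$, I would first expand
$$\nabla_{\widehat H}H=\nabla_{\widehat H}\nabla_SS-\widehat H\!\bigl({\rm Ric}_g(S,S)\bigr)\partial_t-{\rm Ric}_g(S,S)\,\nabla_{\widehat H}\partial_t .$$
The middle term is a multiple of $\partial_t$, hence orthogonal to $H$, so it drops out on pairing with $H$; this is precisely why no bare $t$-derivative of ${\rm Ric}_g$ survives in the final formula. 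For the last term, $\widehat H=H+\partial_t$ together with $\nabla_{\partial_t}\partial_t=0$ and Equation~(\ref{Eq2}) give $\langle\nabla_{\widehat H}\partial_t,H\rangle=\langle\nabla_H\partial_t,H\rangle=-{\rm Ric}_g(H,H)$, which produces the summand $2\,{\rm Ric}_g(S,S)\,{\rm Ric}_g(H,H)$.

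For the main term I would commute the two covariant derivatives through the curvature tensor,
$$\nabla_{\widehat H}\nabla_SS=\widehat{Rm}(\widehat H,S)S+\nabla_{[\widehat H,S]}S+\nabla_S\nabla_{\widehat H}S ,$$
so that, after pairing with $H$, the first term contributes $2\,\widehat{Rm}(\widehat H,S,H,S)$ (in the book's curvature sign convention) and the second, using the bracket formula and $\langle\nabla_SS,H\rangle=k^2$, contributes $2k^4+2k^2\,{\rm Ric}_g(S,S)$. For the third term, torsion-freeness gives $\nabla_{\widehat H}S=\nabla_S\widehat H+[\widehat H,S]=\nabla_SH+\nabla_S\partial_t+(k^2+{\rm Ric}_g(S,S))S$, hence
$$\nabla_S\nabla_{\widehat H}S=\nabla_S\nabla_SH+\nabla_S\nabla_S\partial_t+\nabla_S\bigl[(k^2+{\rm Ric}_g(S,S))S\bigr] ,$$
whose last summand pairs with $2H$ to give a further $2k^4+2k^2\,{\rm Ric}_g(S,S)$ (only the $(k^2+{\rm Ric}_g(S,S))\nabla_SS$ piece survives, since $S\perp H$).

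It remains to handle $\nabla_S\nabla_SH$ and $\nabla_S\nabla_S\partial_t$. For the first I would avoid coordinate expansions and instead differentiate $k^2=\langle H,H\rangle$ twice in arclength: $2\langle\nabla_S\nabla_SH,H\rangle=(k^2)''-2|\nabla_SH|^2$, and then write $|\nabla_SH|^2=|(\nabla_SH)^\perp|^2+\langle\nabla_SH,S\rangle^2$ with $\langle\nabla_SH,S\rangle=-k^2$ (from $0=S\langle H,S\rangle$ and $\nabla_SS=H+{\rm Ric}_g(S,S)\partial_t$), giving $(k^2)''-2|(\nabla_SH)^\perp|^2-2k^4$. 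The term $\nabla_S\nabla_S\partial_t=-\nabla_S\bigl(({\rm Ric}_g(S,\cdot))^\sharp\bigr)$ is where the contribution missed in the book is born: using Equation~(\ref{Eq1}) to pass to $\nabla^g$ (the vertical part being killed on pairing with $H$), and expanding $\nabla^g_S\bigl(({\rm Ric}_g(S,\cdot))^\sharp\bigr)=\bigl((\nabla^g_S{\rm Ric}_g)(S,\cdot)\bigr)^\sharp+({\rm Ric}_g(H,\cdot))^\sharp$ via $\nabla^g_SS=H$, pairing with $2H$ yields precisely $-2(\nabla^g_S({\rm Ric}_g))(S,H)-2\,{\rm Ric}_g(H,H)$.

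Finally I would collect terms: the three quartic contributions $-2k^4$, $+2k^4$ and $+2k^4$ add to $2k^4$; the two $k^2\,{\rm Ric}_g(S,S)$ contributions add to $4k^2\,{\rm Ric}_g(S,S)$; and every remaining term is already in the stated form. I expect the main difficulty to be bookkeeping rather than conceptual: keeping track at each step of whether $\nabla$ or $\nabla^g$ is acting, resisting the temptation to discard vertical $\partial_t$-components early (they are what encode the time-dependence of the connection and generate the previously overlooked terms), and fixing the book's curvature sign convention so that $\widehat{Rm}(\widehat H,S,H,S)$ enters with a plus sign.
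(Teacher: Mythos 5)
Your proposal is correct and reproduces the paper's argument essentially step for step: it starts from $\widehat H\langle H,H\rangle$, substitutes $H=\nabla_SS-{\rm Ric}_g(S,S)\partial_t$, commutes $\nabla_{\widehat H}\nabla_SS$ through $\widehat{Rm}(\widehat H,S)S+\nabla_{[\widehat H,S]}S+\nabla_S\nabla_{\widehat H}S$, applies torsion-freeness to write $\nabla_{\widehat H}S=\nabla_S\widehat H+[\widehat H,S]$, and then evaluates $\nabla_S\nabla_S H$, $\nabla_S\nabla_S\partial_t$, the bracket terms, and the ${\rm Ric}_g(S,S)\partial_t$ term exactly as the paper does. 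The bookkeeping you list (three $\pm 2k^4$ contributions summing to $2k^4$, two $2k^2{\rm Ric}_g(S,S)$ contributions summing to $4k^2{\rm Ric}_g(S,S)$) matches the paper's tally, and your isolation of $\nabla_S\nabla_S\partial_t$ as the source of the previously overlooked terms $-2(\nabla_S^g{\rm Ric}_g)(S,H)-2{\rm Ric}_g(H,H)$ is precisely the paper's computation.
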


\noindent
N.B. The last two terms  in this equation do not appear in Lemma 19.7
and the Riemannian curvature term here differs from the one in Lemma 19.7 in two respects: it is the Riemannian curvature of the metric $\hat g$ rather than of $g$, and one of the vector fields on which it is evaluated is
$\hat H$ replacing $H$ in Lemma 19.7.
 
\begin{proof}
First of all we have
$$\frac{\partial}{\partial t}k^2  =  \widehat H(\langle H,H\rangle).$$
By Equation~(\ref{Eq1}) we have
$\nabla_SS=H+{\rm Ric}_g(S,S)\partial_t$.
This gives
\begin{align*}
\frac{\partial}{\partial t}k^2 = &2\langle \nabla_{\widehat H}\nabla_SS,H\rangle -2\langle\nabla_{\widehat H}({\rm Ric}_g(S,S)\partial_t),H\rangle\\
=& 2\langle \nabla_S\nabla_{\widehat H}S,H\rangle+2\langle\nabla_{[\widehat H,S]}S,H\rangle+2\widehat{Rm}(\widehat H,S,H,S) \\
& -2\langle\nabla_{\widehat H}({\rm Ric}_g(S,S)\partial_t),H\rangle \\
=&  2\langle \nabla_S\nabla_S{\widehat H},H\rangle+2\langle \nabla_S[\widehat H,S],H\rangle+2\langle\nabla_{[\widehat H,S]}S,H\rangle \\ 
& +2\widehat{Rm}(\widehat H,S,H,S) -2\langle\nabla_{\widehat H}({\rm Ric}_g(S,S)\partial_t),H\rangle
\end{align*}
Consider $\langle\nabla_{\widehat H}({\rm Ric}_g(S,S)\partial_t),H\rangle$. Since $\langle\partial_t,H\rangle=0$, this term is equal to
${\rm Ric}_g(S,S)\langle\nabla_{\hat H}\partial_t, H\rangle$. Since $\nabla_{\partial_t}\partial_t=0$, using Equation~(\ref{Eq1}) we have
$$\langle\nabla_{\widehat H}({\rm Ric}_g(S,S)\partial_t),H\rangle={\rm Ric}_g(S,S)\langle\nabla_H\partial_t,H\rangle=-{\rm Ric}_g(S,S){\rm Ric}_g(H,H).$$

We break the first term on the right-hand side into 
\begin{equation}\label{Eq3}
2\langle\nabla_S\nabla_S\hat H,H\rangle=2\langle \nabla_S\nabla_S{\partial_t},H\rangle+2\langle \nabla_S\nabla_S{H},H\rangle.
 \end{equation}
According to Equation~(\ref{Eq2}) the vector field $\nabla_S(\partial_t)$ is a horizontal vector field along the surface
whose inner product with any horizontal vector field $Z$  is $-{\rm Ric}_g(S,Z)$.
This implies that $\nabla^g_S(\nabla_S(\partial_t))$ is a horizontal vector field along the surface whose inner product with any horizontal $Z$ is
$$-(\nabla_S^g({\rm Ric}_g))(S,Z) -{\rm Ric}_g(\nabla_S^gS,Z)=-(\nabla_S^g({\rm Ric}_g))(S,Z) -{\rm Ric}_g(H,Z).$$
Furthermore, since $\nabla_S(\partial_t)$ and $Z$ are horizontal,
$$\langle\nabla_S(\nabla_S(\partial_t)),Z\rangle=\langle\nabla^g_S(\nabla_S(\partial_t)),Z\rangle.$$
Applying this with $Z=H$ gives
$$2\langle \nabla_S\nabla_S{\partial_t},H\rangle=-2(\nabla_S^g({\rm Ric}_g))(S,H) -2{\rm Ric}_g(H,H).$$

 The second term in Equation~(\ref{Eq3}) can be rewritten as 
$$S(S(|H|^2)-2\langle\nabla_SH,\nabla_SH\rangle.$$
Since $S$ is a unit vector we have 
$$ \nabla_SH=(\nabla_SH)^\perp+\langle\nabla_SH,S\rangle S$$
where $\perp$ means the component orthogonal to $S$.
Thus, 
$$-2\langle\nabla_SH,\nabla_SH\rangle=-2\langle (\nabla_SH)^\perp,(\nabla_SH)^\perp\rangle-2\bigl(\langle \nabla_SH,S\rangle\bigr)^2.$$
Since $S$ and $H$ are orthogonal, it follows that
$$\langle \nabla_SH,S\rangle=-\langle H,\nabla_SS\rangle=-\langle H,H+{\rm Ric}_g(S,S)\partial_t\rangle=-\langle H,H\rangle=-k^2,$$
and hence that
$$-2\langle\nabla_SH,\nabla_SH\rangle=-2\langle (\nabla_SH)^\perp,(\nabla_SH)^\perp\rangle-2k^4.$$

Now we plug in for $[\widehat H,S]$
to get
$$\langle\nabla_S[\hat H,S],H\rangle=\langle \nabla_S\bigl((k^2+{\rm Ric}_g(S,S))S\bigr),H\rangle$$
and
$$\nabla_{[\hat H,S]}S,H\rangle=\langle\nabla_{(k^2+{\rm Ric}_g(S,S))S}S,H\rangle.$$
We have 
$$2\langle\nabla_{(k^2+({\rm Ric}_g)(S,S))S}S,H\rangle=2\bigl(k^2+{\rm Ric}_g(S,S)\bigr)\langle H,H\rangle=2\bigl(k^2+{\rm Ric}_g(S,S)\bigr)k^2.$$
Since $S$ and $H$ are perpendicular we have
$$2\langle \nabla_S(k^2+({\rm Ric}_g)(S,S))S,H\rangle=2\bigl(k^2+{\rm Ric}_g(S,S)\bigr)k^2.$$

Since $S(S(|H|^2))=(k^2)''$ all of these together establish the formula given in the lemma.
\end{proof}

Notice that the last five terms on the right-hand side of the equation in Lemma~\ref{lem1} are bounded by a constant depending only on the ambient Ricci flow,
(in fact only on the curvature and its first covariant derivatives), times
$k^2$, $k^2$, $k^2+k$, $k^2$, and $k$, respectively.
Thus, we have the following replacement for Equation (19.5) on p 443:
\begin{cor}
There is a constant $\widehat C<\infty$ depending only on the ambient Ricci flow such that
$$\frac{\partial}{\partial t}k^2 \le (k^2)''-2\langle \nabla_SH)^\perp,(\nabla_SH)^\perp\rangle+2k^4
+\widehat C(k^2+k).$$
\end{cor}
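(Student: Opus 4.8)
The plan is to derive the Corollary directly from Lemma~\ref{lem1} by term-by-term estimation, leaving the three ``good'' terms $(k^2)''$, $-2\langle(\nabla_SH)^\perp,(\nabla_SH)^\perp\rangle$ and $2k^4$ untouched and absorbing everything else into $\widehat C(k^2+k)$. So the whole content is to bound above, by a constant multiple of $k^2+k$, the five terms
\begin{align*}
&-2{\rm Ric}_g(H,H),\qquad 4k^2{\rm Ric}_g(S,S),\qquad 2\widehat{Rm}(\widehat H,S,H,S),\\
&2{\rm Ric}_g(S,S){\rm Ric}_g(H,H),\qquad -2(\nabla^g_S({\rm Ric}_g))(S,H),
\end{align*}
where the constant is allowed to depend only on the ambient Ricci flow.

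First I would record the relevant uniform bounds. Since $M$ is compact and the Ricci flow $g(t)$ is smooth on the compact interval $I=[0,T]$, the metric $\hat g$ is a smooth Riemannian metric on the compact manifold-with-boundary $M\times I$; hence its curvature tensor $\widehat{Rm}$, the Ricci tensors ${\rm Ric}_g$, and their horizontal covariant derivatives $\nabla^g({\rm Ric}_g)$ all have norms (with respect to $\hat g$) bounded by some finite constant $\Lambda$ on $M\times I$. (Equivalently, the formulas for $\widehat\Gamma$ in the preliminary material express $\widehat{Rm}$ in terms of $Rm_g$, $\nabla^g({\rm Ric}_g)$ and quadratic expressions in ${\rm Ric}_g$, which gives the bound on $\widehat{Rm}$ directly.) The only non-obvious input to the size estimates is that $S$ is a unit vector, $|H|=k$, and --- crucially --- that $\widehat H=H+\partial_t$ with $H$ horizontal, so $|\widehat H|^2=k^2+1$ and hence $|\widehat H|=\sqrt{k^2+1}\le k+1$.

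Then I would estimate, using Cauchy--Schwarz: $|{\rm Ric}_g(H,H)|\le\Lambda k^2$; $|{\rm Ric}_g(S,S)|\le\Lambda$, so $|4k^2{\rm Ric}_g(S,S)|\le 4\Lambda k^2$; $|{\rm Ric}_g(S,S)\,{\rm Ric}_g(H,H)|\le\Lambda^2 k^2$; $|(\nabla^g_S({\rm Ric}_g))(S,H)|\le\Lambda k$; and $|\widehat{Rm}(\widehat H,S,H,S)|\le\Lambda\,|\widehat H|\,|H|\le\Lambda k(k+1)=\Lambda(k^2+k)$. Adding these, the sum of the absolute values of the five terms is at most $(2\Lambda^2+8\Lambda)k^2+4\Lambda k$, which is $\le\widehat C(k^2+k)$ for, e.g., $\widehat C=2\Lambda^2+12\Lambda$. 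Substituting into the identity of Lemma~\ref{lem1} and keeping $(k^2)''$, $-2\langle(\nabla_SH)^\perp,(\nabla_SH)^\perp\rangle$ and $2k^4$ unchanged yields the claimed inequality.

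The main --- and really only --- obstacle is conceptual bookkeeping rather than computation: one must notice the $|\widehat H|=\sqrt{k^2+1}$ factor (this is exactly the source of the linear-in-$k$ term that was missing in the book's Lemma 19.7, since it makes $2\widehat{Rm}(\widehat H,S,H,S)$ contribute a $k$-term and not only a $k^2$-term), and one must be sure that the ambient curvature quantities $\widehat{Rm}$ and $\nabla^g({\rm Ric}_g)$, not merely ${\rm Ric}_g$ itself, are uniformly controlled over the compact space-time $M\times I$; both points are immediate once stated.
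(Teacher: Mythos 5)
Your proposal is correct and matches the paper's approach exactly: the paper simply asserts that the five extra terms in Lemma~\ref{lem1} are bounded by constants (depending only on the ambient flow) times $k^2$, $k^2$, $k^2+k$, $k^2$, and $k$ respectively, which is precisely the estimate chain you carry out, including the key observation that $|\widehat H|=\sqrt{k^2+1}$ makes the $\widehat{Rm}$ term contribute the linear-in-$k$ piece. You have merely supplied the routine Cauchy--Schwarz details the paper leaves implicit.
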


Now it is time to modify $k$ by replacing it with $h=h_\epsilon=\sqrt{k^2+\epsilon^2}$ for some $\epsilon>0$. 

Then $h>k$ and $h'=\frac{k}{h}k'$ so that $(h')^2<(k')^2$, whenever $k\not=0$.
Of course all derivatives of $h^2$ are equal to the corresponding derivatives of $k^2$.
We have 
\begin{align*}
\frac{\partial (h^2)}{\partial t} &\le  (h^2)''-2\langle(\nabla_SH)^\perp,(\nabla_SH)^\perp\rangle+2k^4+\widehat C(k^2+k)\\
           &\le 2(h')^2+2hh''-2\langle(\nabla_SH)^\perp,(\nabla_SH)^\perp\rangle+2k^4+\widehat C(k^2+k)\\
\end{align*}
We have 
$$(h^2)'=(k^2)'=2\langle\nabla_SH,H\rangle.$$
Since $H$ is perpendicular to $S$ we can rewrite this as
$(h^2)'=2\langle (\nabla_SH)^\perp,H\rangle$.
Hence
$$h'= \frac{\langle\nabla_SH)^\perp,H\rangle}{h}.$$
Since $|H|^2<h^2$, this implies that 
$$(h')^2\le \langle\nabla_SH)^\perp,(\nabla_SH)^\perp\rangle.$$

Plugging this in gives
$$
\frac{\partial (h^2)}{\partial t} =2h\frac{\partial h}{\partial t}\le 2hh''+2k^4+\widehat C(k^2+k).
$$
Now use the fact that $ 0\le k<h$ to rewrite this to give the modified and corrected version of Claim 19.8 on p 443:
\begin{equation}\label{epsilon19.8}
\frac{\partial h}{\partial t}  \le h''+k^3+C_1(h+1),
\end{equation} 
where $C_1=\widehat C/2$. 

We define the total length as
$$L(t)=\int |X|(x,t)dx=\int ds$$
and the total curvature 
$$\Theta(t)=\int k(x,t)|X|(x,t)dx=\int k(x,t)ds.$$

\begin{lem}{\rm (Corrected version of 19.9.)}
Let $C_2$ be an upper bound for $|Ric_g|$ over $M\times I$. Then
$$\frac{d}{dt}L\le \int(C_2-k^2)ds,$$
and
$$\frac{d}{dt}\Theta\le (C_1+C_2)\Theta+C_1L.$$
\end{lem}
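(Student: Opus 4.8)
The plan is to push $d/dt$ inside each integral using the time-independent coordinate $x$ on $S^1$, and then to substitute the evolution equations already established. Since the curves are immersed, $|X|^2=\langle X,X\rangle$ is a strictly positive smooth function on $S^1\times I$, so $L$ is smooth and $\frac{d}{dt}L=\int\partial_t|X|\,dx$. The restated Lemma~19.6 gives $\partial_t|X|^2=\widehat H(|X|^2)=-2{\rm Ric}_g(X,X)-2k^2|X|^2$, hence $\partial_t|X|=-{\rm Ric}_g(X,X)/|X|-k^2|X|$. Bounding $|{\rm Ric}_g(X,X)|\le C_2|X|^2$ gives $\partial_t|X|\le(C_2-k^2)|X|$, and integrating in $x$ with $ds=|X|\,dx$ yields $\frac{d}{dt}L\le\int(C_2-k^2)\,ds$.

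For the total curvature I would first work with the regularized quantity $\Theta_\epsilon(t)=\int h_\epsilon|X|\,dx$, which is smooth in $t$ because $h_\epsilon=\sqrt{k^2+\epsilon^2}\ge\epsilon>0$. The product rule gives $\frac{d}{dt}\Theta_\epsilon=\int(\partial_t h_\epsilon)|X|\,dx+\int h_\epsilon(\partial_t|X|)\,dx$. Into the first term I substitute the corrected Claim~19.8, inequality~(\ref{epsilon19.8}), multiplied by $|X|>0$; into the second I substitute $\partial_t|X|\le(C_2-k^2)|X|$, multiplied by $h_\epsilon>0$. The resulting upper bound for $\frac{d}{dt}\Theta_\epsilon$ is a sum of $\int h_\epsilon''\,ds$, $\int(k^3-h_\epsilon k^2)\,ds$, $C_1\int h_\epsilon\,ds$, $C_1\int ds$, and $C_2\int h_\epsilon\,ds$. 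The term $\int h_\epsilon''\,ds$ vanishes, being the integral around the closed curve $c(\cdot,t)$ of the $s$-derivative of the smooth periodic function $h_\epsilon'$; and $k^3-h_\epsilon k^2=k^2(k-h_\epsilon)\le0$ since $0\le k<h_\epsilon$. What survives is exactly $\frac{d}{dt}\Theta_\epsilon\le(C_1+C_2)\Theta_\epsilon+C_1L$.

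It then remains to remove the regularization: integrating the inequality for $\Theta_\epsilon$ over a subinterval of $I$ and letting $\epsilon\to0$ — using $0\le h_\epsilon-k\le\epsilon$ and the boundedness of $L$ on the compact interval $I$, so that $\Theta_\epsilon\to\Theta$ uniformly and dominated convergence applies to the time integral — yields the integral form of $\frac{d}{dt}\Theta\le(C_1+C_2)\Theta+C_1L$, which is what the statement asserts. I expect the main obstacle to be precisely this final limiting step, together with the need to regularize \emph{before} differentiating (since $\Theta$ need not be differentiable where $k$ vanishes); the identity $\int h_\epsilon''\,ds=0$ and the sign $k^3-h_\epsilon k^2\le0$ are the two small but essential algebraic points, while everything else is direct substitution into formulas already in hand.
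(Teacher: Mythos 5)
Your proposal is correct and follows essentially the same route as the paper: differentiating under the integral, regularizing via $h_\epsilon=\sqrt{k^2+\epsilon^2}$, substituting the corrected Claim~19.8 (Equation~\ref{epsilon19.8}) and the $|X|$-evolution from the restated Lemma~19.6, discarding $\int h_\epsilon''\,ds=0$ and $k^3-h_\epsilon k^2\le 0$, and then letting $\epsilon\to 0$. The only difference is that you spell out the $\epsilon\to 0$ limiting argument in more detail than the paper, which simply asserts it.
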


\begin{proof}
The argument now follows closely in the one in the book.

The first inequality is proved on p 444 of the book. It implies that $dL/dt\le C_2L$.

For the second we consider the $\epsilon$-modified total curvature
$$\Theta_\epsilon(t)=\int h_\epsilon(x,t) ds.$$ 
We have
$$\frac{d}{dt}\Theta_\epsilon=\int\frac{\partial}{\partial t}(h_\epsilon|X|)dx =\int\left(\frac{\partial h_\epsilon}{\partial t}|X|+h_\epsilon\frac{\partial |X|}{\partial t}\right)dx.$$
Thus, 
\begin{align*}
\frac{d}{dt} \Theta_\epsilon & \le \int\bigl( h_\epsilon''+k^3+C_1(h_\epsilon+1)\bigr)ds+\int\frac{h_\epsilon}{2|X|}\bigl(-2{\rm Ric}_g(X,X)-2k^2|X|^2\bigr)dx \\
&=\int \bigl(h_\epsilon''+k^3+C_1(h_\epsilon+1)\bigr)ds-\int h_\epsilon\bigl({\rm Ric}_g(S,S)+k^2\bigr)ds\\
& \le \int\bigl(h_\epsilon''+C_1(h_\epsilon+1)-h_\epsilon({\rm Ric}_g(S,S)\bigr)ds.
\end{align*}
Since $\int h_\epsilon''ds=0$ by the fundamental theorem of calculus, we conclude that
$$\frac{d}{dt}\Theta_\epsilon(t)\le (C_1+C_2)\Theta_\epsilon(t) +C_1L(t).$$
Taking the limit as $\epsilon\mapsto 0$ gives the second statement in the lemma.
\end{proof}

It follows that  the correct version of Corollary 19.10 is $L(t)\le L(0)e^{C_2t}$, and
that we have an exponential bound on $\Theta(t)$ depending on the initial total curvature and the initial length: for example
 $$\Theta(t)\le \Theta(t)+L(t)\le \left(\Theta(0)+L(0)\right)e^{(C_1+C_2)t}.$$

\bigskip

 \noindent{\bf Effect of these corrections on the proof of Lemma 19.14 on p 447.}
 In the proof of Lemma 19.14 on p 447 there is a computation similar to the one in the proof of Claim~19.8. 
   This lemma establishes that given a Ricci flow defined on time $[t_0,t_1]$ and a ramp defined at time $t_0$, the curve shrinking flow produces a ramp defined  for all $t\in [t_0,t_1]$.  Before the start of this argument, we have already established (Corollary 19.13) that as long as the curve shrinking flow exists it produces ramps and indeed on any finite interval the minimum value of $u$ is bounded away from zero. The point is to show that if the curve shrinking flow is defined on $[t_0,t_1')$, then as we approach $t'_1$ the curvature $k$ remains bounded. We claim to do this by computing $\partial(k/u)/\partial t$
  where $u$ is the length of the projection of the unit tangent vector $S$ onto the circle direction, which of course is at most $1$. 
 This computation uses the formula in Claim 19.8 for $\partial k/\partial t$ and hence needs modification.
 
One simply replaces $k/u$ by $h/u$ and gets uses the $\epsilon$-version of Claim 19.8
(Equation~\ref{epsilon19.8}) as given above.
Following the computations on p 447 one ends up with
$$\frac{\partial}{\partial t}\left(\frac{h}{u}\right)\le \left(\frac{h}{u}\right)''+\left(\frac{2u'}{u}\right)\left(\frac{h}{u}\right)'+C'\left(\frac{h+1}{u}\right).$$

The maximum principle then gives an exponentially growing upper bound to the growth rate of the maximum of $h/u$. Since $u$ is bounded away from $0$, this gives an exponential upper bound to the growth rate of the maximum of $h$, and hence shows that $k$ stays bounded as we approach $t'_1$.

\bigskip

\noindent{\bf Effect of these corrections on Lemma 19.24.}
Lemma 19.24 is not precisely stated in that the constant $\delta >0$ and the constants $\widetilde C_i<\infty$ asserted to exist must be allowed to depend depend on the chosen family $\Gamma$, or more precisely, on  total curvature and length bounds at the initial time. With this understanding, the conclusions of this lemma hold and the proof is as given.

For any map $\Gamma\colon S^2\to \Lambda M$ as in Lemma 19.17 and any $0<\lambda<1$ the associated family $\widetilde\Gamma(c)^\lambda$, with $c\in S^2$, have uniform total curvature and length bounds that depend only on $\Gamma$. Thus, Lemma 19.24, as corrected, applies to give uniform  constants for all the curve-shrinking flows $\widetilde \Gamma(c)^\lambda$ for any $0<\lambda<1$ and any $c\in S^2$.

\bigskip

\noindent{\bf Corrections to \S 8.1}
The proof of the corrected version of  Lemma 19.24 is contained in \S 8.1 in a series of claims and then a corollary. The constants $D_0$ in Claim 19.59, $D_1$ in Claim 19.60, $D_2,D_3$ in Claim 19.61, and $D_4$ in Corollary 19.62 must be allowed to depend on the initial curvature bound, $\Theta$, and a bound, $L_0$, for the initial length, as well as on ambient Ricci flow. With this change, the conclusions
hold and, with one exception, the proofs are as given. The exception is in the inequality on the fifth line of the proof of Claim 19.61. We must add a term of the form $\int_{\gamma_t}C_1'\varphi ds$ to the right-hand side. Since the length of $\gamma_t$ is at most $re^{C_2(t_2-t')}$, this term is bounded by a constant
depending on the ambient Ricci flow and therefore can be absorbed in the constant $D_2$. Hence, this change doesn't affect the statement.

\bigskip

\end{document}